\newtheorem{Cor}{Corollary}
 \newtheorem{Lemma}{Lemma}
 \newtheorem{ex}{Example}
 \theoremstyle{definition}
 \theoremstyle{remark}
 \newtheorem{Remark}[Lemma]{Remark}
 \numberwithin{equation}{subsection}
\begin{document}
\title[DYNAMICS OF NON-AUTONOMOUS DISCRETE DYNAMICAL SYSTEMS]{DYNAMICS OF NON-AUTONOMOUS DISCRETE DYNAMICAL SYSTEMS}%
\author{PUNEET SHARMA AND MANISH RAGHAV}
\address{Department of Mathematics, I.I.T. Jodhpur, Old Residency Road, Ratnada, Jodhpur-342011, INDIA}%
\email{puneet.iitd@yahoo.com, manishrghv@gmail.com }%

%\thanks{The first author thanks CSIR for financial support.}%

\subjclass{37B20, 37B55, 54H20}

\keywords{non-autonomous dynamical systems, transitivity, weakly
mixing, topological mixing, topological entropy, Li-Yorke chaos}

\begin{abstract}
In this paper we study the dynamics of a general non-autonomous
dynamical system generated by a family of continuous self maps on a
compact space $X$. We derive necessary and sufficient conditions for
the system to exhibit complex dynamical behavior. In the process we
discuss properties like transitivity, weakly mixing, topologically
mixing, minimality, sensitivity, topological entropy and Li-Yorke
chaoticity for the non-autonomous system. We also give examples to
prove that the dynamical behavior of the non-autonomous system in
general cannot be characterized in terms of the dynamical behavior
of its generating functions.
\end{abstract}
\maketitle

\section{INTRODUCTION}

Let $(X,d)$ be a compact metric space and let $\mathbb{F}= \{f_n: n
\in \mathbb{N} \}$ be a family of continuous self maps on $X$. Any
such family $\mathbb{F}$ generates a non-autonomous dynamical system
via the relation $x_{n}= f_n(x_{n-1})$. Throughout this paper, such
a dynamical system will be denoted by $(X,\mathbb{F})$. For any
$x\in X$, $\{ f_n \circ f_{n-1} \circ \ldots \circ f_1(x) :
n\in\mathbb{N}\}$ defines the orbit of $x$. The objective of study
of a non autonomous dynamical system is to investigate the orbit of
an arbitrary point $x$ in $X$. For notational convenience, let
$\omega_n(x) = f_n\circ f_{n-1}\circ \ldots \circ f_1(x)$ be the
state of the system after $n$ iterations. If $y=\omega_n(x)=
f_n\circ f_{n-1}\circ \ldots \circ f_1(x)$, then, $x\in
f_1^{-1}\circ f_2^{-1}\circ\ldots\circ f_n^{-1}(y)=\omega_n^{-1}(y)$
and hence $\omega_n^{-1}$ traces the
point $n$ units back in time.\\

A point $x$ is called \textit{periodic} for $\mathbb{F}$ if there
exists $n\in\mathbb{N}$ such that $\omega_{nk}(x)=x$ for all $k\in
\mathbb{N}$. The least such $n$ is known as the period of the point
$x$. The system $(X,\mathbb{F})$ is \textit{transitive} (or
$\mathbb{F}$ is transitive) if for each pair of open sets $U,V$ in
$X$, there exists $n \in \mathbb{N}$ such that $\omega_n(U)\bigcap
V\neq \phi$. The system $(X,\mathbb{F})$ is said to be
\textit{minimal} if it does not contain any proper non-trivial
subsystems. The system $(X,\mathbb{F})$ is said to be \textit{weakly
mixing} if for any collection of non-empty open sets $U_1, U_2, V_1,
V_2$, there exists a natural number $n$ such that $\omega_n(U_i)
\bigcap V_i \neq \phi$, $i=1,2$. Equivalently, we say that the
system is weakly mixing if $\mathbb{F}\times\mathbb{F}$ is
transitive. The system is said to be \textit{topologically mixing}
if for every pair of non-empty open sets $U, V$ there exists a
natural number $K$ such that $\omega_n(U) \bigcap V \neq \phi$ for
all $n \geq K$. The system is said to be \textit{sensitive} if there
exists a $\delta>0$ such that for each $x\in X$ and each
neighborhood $U$ of $x$, there exists $n\in \mathbb{N}$ such that
$diam(\omega_n(U))>\delta$. If there exists $K>0$ such that
$diam(\omega_n(U))>\delta$, $~~\forall n\geq K$, then the system is
\textit{cofinitely sensitive}. A set $S$ is said to be
\textit{scrambled} if for any $x,y\in S$,
$\limsup\limits_{n\rightarrow \infty} d(\omega_n(x),\omega_n(y))>0$
but $\liminf\limits_{n\rightarrow \infty}
d(\omega_n(x),\omega_n(y))=0$. A system $(X,\mathbb{F})$ is said to
be \textit{Li-Yorke chaotic} if it contains an uncountable scrambled
set. Incase the $f_n$'s coincide, the above definitions coincide
with the known notions of an autonomous dynamical system. See
\cite{bc,bs,de} for details.\\

We now define the notion of \textit{topological entropy} for a
non-autonomous system $(X,\mathbb{F})$.\\

Let $X$ be a compact space and let $\mathcal{U}$ be an open cover of
$X$. Then $\mathcal{U}$ has a finite subcover. Let $\mathcal{L}$ be
the collection of all finite subcovers and let $\mathcal{U}^*$ be
the subcover with minimum cardinality, say $N_{\mathcal{U}}$. Define
$H(\mathcal{U}) = log N_{\mathcal{U}} $. Then $H(\mathcal{U})$ is
defined as the \textit{entropy} associated with the open cover
$\mathcal{U}$. If $\mathcal{U}$ and $\mathcal{V}$ are two open
covers of $X$, define, $\mathcal{U} \vee \mathcal{V} = \{ U \bigcap
V : U \in \mathcal{U}, V \in \mathcal{V} \}$. An open cover $\beta$
is said to be refinement of open cover $\alpha$ i.e. $\alpha \prec
\beta$, if every open set in $\beta$ is contained in some open set
in $\alpha$. It can be seen that if $\alpha \prec \beta$ then
$H(\alpha) \leq H(\beta)$. For a self map $f$ on $X$, $f^{-1}
(\mathcal{U}) = \{ f^{-1} (U) : U \in \mathcal{U} \}$
is also an open cover of $X$. Define,\\

\centerline{$h _{\mathbb{F}, \mathcal{U}} = \limsup \limits_{k
\rightarrow \infty} \frac{H( \mathcal{U} \vee
\omega_1^{-1}(\mathcal{U}) \vee \omega_2^{-1}(\mathcal{U}) \vee
\ldots \vee \omega_{k-1}^{-1}(\mathcal{U}))}{k}$} \vskip .25cm

Then $\sup h _{\mathbb{F}, \mathcal{U}}$, where $\mathcal{U}$ runs
over all possible open covers of $X$ is known as the
\textit{topological entropy of the system $(X,\mathbb{F})$} and is
denoted by $h(\mathbb{F})$. Incase the maps $f_n$ coincide, the
above definition coincides with the known notion of topological
entropy. See \cite{bc,bs} for details.\\

Let $(X,d)$ be a metric space and let $CL(X)$ denote the collection
of all non-empty closed subsets of $X$. For any two closed subsets
$A,B$ of $X$, define,

\centerline{$ d_H (A, B) = \inf \{ \epsilon >0 : A \subseteq
S_{\epsilon} (B) \text{ and } B \subseteq S_{\epsilon} (A) \} $}

It is easily seen that $d_H$ defined above is a metric on $CL(X)$
and is called \textit{Hausdorff metric}. The metric $d_H$ preserves
the metric on $X$, i.e. $d_H(\{x\}, \{y\}) = d(x,y)$ for all $x,y
\in X$. The topology generated by this metric is known as the
\textit{Hausdorff metric topology} on $CL(X)$ with respect to the
metric $d$ on $X$ \cite{be,mi}. It is known that $\lim
\limits_{n\rightarrow \infty} A_i =A$ if and only if $A_i$ converges
to $A$ under Hausdorff metric\cite{do}.\\

Many of the natural systems occurring in the nature have been
studied using mathematical models. While systems like the logistic
model have been used to characterize the population growth,
continuous systems like the Lorenz model have been used for weather
prediction to a great precision. Although various mathematical
models exploring such systems have been proposed and long term
behavior of such systems has been studied, most of the mathematical
models are autonomous in nature and hence cannot be used to model a
general dynamical system. Thus, there is a strong need to study and
develop the theory of non-autonomous dynamical systems. The theory
of non-autonomous dynamical systems helps characterizing the
behavior of various natural phenomenon which cannot be modeled by
autonomous systems. Some of the studies in this direction have been
made and some results have been obtained. In \cite{sk1} authors
study the topological entropy of a general non-autonomous dynamical
system generated by a family $\mathbb{F}$. In particular authors
study the case when the family $\mathbb{F}$ is equicontinuous or
uniformly convergent. In \cite{sk2} authors discuss minimality
conditions for a non-autonomous system on a compact Hausdorff space
while focussing on the case when the non-autonomous system is
defined on a compact interval of the real line. In \cite{jd} authors
prove that if $f_n\rightarrow f$, in general there is no relation
between chaotic behavior of the non-autonomous system generated by
$f_n$ and the chaotic behavior of $f$. In \cite{bp} authors
investigate properties like weakly mixing, topological mixing,
topological entropy and Li-Yorke chaos for the non-autonomous
system. They prove that the dynamics of a non-autonomous system is
very different from the autonomous case. They also give a few
techniques to study the qualitative behavior of
a non-autonomous system.\\

Although some studies have been made and some useful results have
been obtained, a lot of questioned in the field are still unanswered
and a lot of investigation still needs to be done. In this paper, we
study different possible dynamical notions for a non-autonomous
dynamical system generated by a family $\mathbb{F}$. We prove that
if $\mathbb{F}=\{f_1,f_2,\ldots f_n\}$ is finite, the non-autonomous
system is topological mixing if and only if the autonomous system
$(X,f_n\circ f_{n-1}\circ\ldots\circ f_1)$ is topological mixing. We
also prove that if $(X,f_n\circ f_{n-1}\circ\ldots\circ f_1)$ has
positive topological entropy (is Li-Yorke chaotic) then
$(X,\mathbb{F})$ also has positive topological entropy (is Li-Yorke
chaotic).  We also establish similar results for transitivity/dense
periodicity of the non-autonomous system.  In addition, if
$\mathbb{F}$ is commutative, the non-autonomous system is weakly
mixing if and only if $(X,f_n\circ f_{n-1}\circ\ldots\circ f_1)$ is
weakly mixing. Thus, we prove that if the family $\mathbb{F}$ is
finite, under certain assumptions, the study of non-autonomous
dynamical system can be reduced to the autonomous case. We also
establish alternate criteria to establish weakly mixing/topological
mixing for a general non-autonomous dynamical system. In the end, we
study the dynamical behavior of the system with respect to the
members of the family $\mathbb{F}$. We prove that the dynamical
behavior of the generating members in general does not carry over to
the non-autonomous system generated. While the non-autonomous system
can exhibit a certain dynamical notion without any of the generating
members exhibiting the same, on some instances, the system might not
exhibit certain dynamical behavior even when all the generating
members exhibit the same.

\section{Main Results}

Throughout the paper, let $(X,d)$ be a compact metric space and let
$\mathbb{F}= \{f_n: n \in \mathbb{N}\}$ be a family of surjective
continuous self maps on $X$.\\

We first give some results establishing various dynamical properties
of the non-autonomous system, when the family $\mathbb{F}=
\{f_1,f_2,\ldots,f_n\}$ is finite. It is worth mentioning that when
the family $\mathbb{F}=\{f_1,f_2,\ldots,f_n\}$ is finite, the
non-autonomous dynamical system is generated by the relation $x_k =
f_k(x_{k-1})$ where $f_k = f_{(1+(k-1)\mod n)}$.\\

\begin{Lemma}
$(X,f_n\circ f_{n-1}\circ\ldots\circ f_1)$ has dense set of periodic
points  $\Rightarrow$ $(X,\mathbb{F})$ has dense set of periodic
points.
\end{Lemma}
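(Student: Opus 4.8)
The plan is to reduce everything to the iterates of the single map $g = f_n \circ f_{n-1} \circ \cdots \circ f_1 = \omega_n$, exploiting the fact that the finite family is extended periodically through $f_k = f_{(1+(k-1)\bmod n)}$. First I would establish the key identity $\omega_{kn} = g^k$ for every $k \in \mathbb{N}$. This follows by induction on $k$: the block of maps composed between steps $(j-1)n$ and $jn$ is exactly $f_{jn} \circ f_{jn-1} \circ \cdots \circ f_{(j-1)n+1} = f_n \circ f_{n-1} \circ \cdots \circ f_1 = g$, since the periodic indexing gives $f_{jn} = f_n$, $f_{jn-1} = f_{n-1}$, $\ldots$, $f_{(j-1)n+1} = f_1$. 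Concatenating $k$ such blocks yields $\omega_{kn} = g \circ g \circ \cdots \circ g = g^k$.

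With this identity in hand, I would show that every periodic point of $g$ is a periodic point of $\mathbb{F}$. Suppose $x$ satisfies $g^p(x) = x$ for some $p \in \mathbb{N}$. Choosing the common period $m = pn$, for every $k \in \mathbb{N}$ we obtain $\omega_{mk}(x) = \omega_{pnk}(x) = g^{pk}(x) = (g^p)^k(x) = x$, so $x$ is periodic for $\mathbb{F}$ in the sense of the definition (I use $m$ rather than $n$ for the period to avoid a clash with the cardinality of the family). Hence the set of periodic points of the autonomous system $(X,g)$ is contained in the set of periodic points of $(X,\mathbb{F})$.

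The conclusion is then immediate: if the periodic points of $g = f_n \circ f_{n-1} \circ \cdots \circ f_1$ are dense in $X$, then the set of periodic points of $(X,\mathbb{F})$, being a superset of a dense set, is itself dense. I do not anticipate a serious obstacle here; the only point requiring care is the bookkeeping in the identity $\omega_{kn} = g^k$, making sure the periodic re-indexing $f_k = f_{(1+(k-1)\bmod n)}$ aligns the blocks correctly, and then selecting a single period $m = pn$ that works simultaneously for all $k$ in the definition of periodicity for $\mathbb{F}$.
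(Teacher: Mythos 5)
Your proposal is correct and follows essentially the same route as the paper's proof: both use the identity $\omega_{rn}=(f_n\circ\cdots\circ f_1)^r$ to convert a point with $(f_n\circ\cdots\circ f_1)^p(x)=x$ into a point satisfying $\omega_{pnk}(x)=x$ for all $k$, and then conclude density. Your write-up merely makes the block-indexing bookkeeping explicit, which the paper leaves implicit.
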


\begin{proof}
Let $U$ be any  non-empty open subset of $X$. As $f_n\circ
f_{n-1}\circ\ldots\circ f_1$ has dense set of periodic points, there
exists $k\in \mathbb {N}$ and $x\in U$ such that $(f_n\circ
f_{n-1}\circ\ldots\circ f_1)^k(x)=x$. Thus, $\omega_{nk}(x)=x$.
Consequently $\omega_{rnk}(x)=x~~\forall r\geq 1$ and $x$ is also
periodic for $(X,\mathbb{F})$. Hence $(X,\mathbb{F})$ has dense set
of periodic points.
\end{proof}
\vskip 0.25cm

\begin{Lemma}
If $(X,f_n\circ f_{n-1}\circ\ldots\circ f_1)$ is transitive, then
$(X,\mathbb{F})$ is transitive.
\end{Lemma}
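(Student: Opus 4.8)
The plan is to reduce everything to the observation that the state map after a multiple of $n$ steps coincides with a power of the single return map $g = f_n\circ f_{n-1}\circ\cdots\circ f_1$. First I would record the key identity $\omega_{kn}=g^k$ for every $k\in\mathbb{N}$. Since the family is finite and the maps are applied periodically via $f_k = f_{(1+(k-1)\bmod n)}$, the composition $f_{kn}\circ f_{kn-1}\circ\cdots\circ f_1$ breaks into $k$ identical blocks, each equal to $f_n\circ\cdots\circ f_1=g$; grouping these blocks gives $\omega_{kn}=g^k$. (This is the same mechanism used in the preceding lemma on dense periodic points, where periodicity under $g$ was lifted along the subsequence of times that are multiples of $n$.)

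With this identity in hand, transitivity of $(X,\mathbb{F})$ follows immediately. Given any pair of non-empty open sets $U,V\subseteq X$, the transitivity of the autonomous system $(X,g)$ supplies a $k\in\mathbb{N}$ with $g^k(U)\cap V\neq\emptyset$. Substituting $\omega_{kn}=g^k$ yields $\omega_{kn}(U)\cap V\neq\emptyset$, so taking $m=kn$ provides exactly the natural number demanded by the definition of transitivity for the non-autonomous system.

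I do not expect a genuine obstacle here; the only point requiring care is the bookkeeping that the periodic relabeling really makes each length-$n$ block equal to $g$ (so that $\omega_{kn}=g^k$, and not some reshuffled composition). It is worth emphasizing that transitivity only asks for a single witnessing time, so it suffices to hit along the subsequence of times $m=kn$: we never need to control the intermediate maps $\omega_m$ for $m$ not divisible by $n$. This is what makes the implication in one direction essentially automatic, and it is the reason the converse (transitivity of $(X,\mathbb{F})$ forcing transitivity of $(X,g)$) is the genuinely delicate direction rather than this one.
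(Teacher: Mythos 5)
Your proposal is correct and follows essentially the same route as the paper: both use the identity $\omega_{kn}=(f_n\circ\cdots\circ f_1)^k$ coming from the periodic relabeling of the finite family, and then transfer the witnessing time $k$ for the autonomous map to the time $kn$ for the non-autonomous system. Your write-up merely makes explicit the block-decomposition bookkeeping that the paper leaves implicit in its "Consequently".
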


\begin{proof}
Let $U,V$ be any pair of non-empty open subsets of $X$, As $f_n\circ
f_{n-1}\circ\ldots\circ f_1$ is transitive, there exists $k\in
\mathbb {N}$ such that $(f_n\circ f_{n-1}\circ\ldots\circ
f_1)^k(U)\cap V \neq \phi$. Consequently $\omega_{nk}(U)\cap V\neq
\phi$ and hence $(X,\mathbb{F})$ is transitive.
\end{proof}
\vskip 0.25cm

The above result establishes the transitivity of the non-autonomous
system, incase the corresponding autonomous system is transitive.
However, the correspondence is one-sided and the converse of the
above result is not true. We give an example in support of our
statement.

\begin{ex}
\label{ex9}
Let $I$ be the unit interval and let $f_1,f_2$ be defined as\\

$f_1(x) = \left\{%
\begin{array}{ll}
            2x+\frac{1}{2}  & \text{for x} \in [0, \frac{1}{4}] \\
           -2x+\frac{3}{2} & \text{for x} \in [\frac{1}{4}, \frac{3}{4}] \\
            2x-\frac{3}{2}  & \text{for x} \in [\frac{3}{4},1] \\
\end{array} \right.$

$f_2(x) = \left\{%
\begin{array}{ll}
        x+\frac{1}{2}             & \text{for x} \in [0, \frac{1}{2}] \\
        -4x+3                     & \text{for x} \in [\frac{1}{2},\frac{3}{4}] \\
       2x-\frac{3}{2} & \text{for x} \in [\frac{3}{4},1] \\
\end{array} \right.$ \\
Let $\mathbb{F}=\{f_1,f_2\}$ and $({X,\mathbb{F}})$ be the
corresponding non-autonomous dynamical system. As $(X,f_2\circ f_1)$
has an invariant set $U=[\frac{1}{2},1]$, $f_2\circ f_1$ is not
transitive. However, as $f_1$ expands every open set $U$ in $[0,1]$
and $f_2$ expands the right half of the unit interval with
$f_2([0,\frac{1}{2}])=[\frac{1}{2},1]$, the non-autonomous system
generated by $\mathbb{F}$ is transitive.

\end{ex}

\begin{Lemma}\label{wm1}
If $\mathbb{F}$ is a commutative family, then,
$\mathbb{F}\times\mathbb{F}$ is transitive if and only if
$\underbrace{\mathbb{F}\times\mathbb{F}\times\ldots\times\mathbb{F}}_{n~~
times}$ is transitive $~~\forall n\geq 2$.
\end{Lemma}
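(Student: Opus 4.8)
The forward implication $(\Leftarrow)$ is immediate: taking $n=2$, transitivity of $\underbrace{\mathbb{F}\times\cdots\times\mathbb{F}}_{2}=\mathbb{F}\times\mathbb{F}$ is exactly the hypothesis. So the content is $(\Rightarrow)$. The plan is to reformulate $n$-fold transitivity as a finite intersection property and prove it by induction. For nonempty open $U,V\subseteq X$ write $N(U,V)=\{k\in\mathbb{N}:\omega_k(U)\cap V\neq\phi\}$, and let $\mathbb{F}^{(n)}:=\underbrace{\mathbb{F}\times\cdots\times\mathbb{F}}_{n}$ act on $X^n$ by $\omega_k\times\cdots\times\omega_k$. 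Unwinding the definition and testing transitivity on the basic open boxes $U_1\times\cdots\times U_n$ and $V_1\times\cdots\times V_n$, one sees that $\mathbb{F}^{(n)}$ is transitive if and only if $\bigcap_{i=1}^n N(U_i,V_i)\neq\phi$ for every choice of nonempty open $U_1,\ldots,U_n,V_1,\ldots,V_n$. Thus the goal is precisely to show that, under the hypothesis that $\mathbb{F}\times\mathbb{F}$ is transitive, the family $\{N(U,V)\}$ has the finite intersection property.

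The engine I would use is a Furstenberg-type filter lemma: assuming $\mathbb{F}\times\mathbb{F}$ transitive, for any two pairs $(U_1,V_1),(U_2,V_2)$ of nonempty open sets there exist nonempty open $P,Q$ with $N(P,Q)\subseteq N(U_1,V_1)\cap N(U_2,V_2)$. Granting this, the induction is short. The base case $n=2$ is the hypothesis, and $n=1$ is transitivity of $\mathbb{F}$, which is inherited from $\mathbb{F}\times\mathbb{F}$ as a factor via either coordinate projection. For the step from $n$ to $n+1$, I would apply the filter lemma to $(U_1,V_1)$ and $(U_2,V_2)$ to replace them by a single pair $(P,Q)$ with $N(P,Q)\subseteq N(U_1,V_1)\cap N(U_2,V_2)$; then $\bigcap_{i=1}^{n+1}N(U_i,V_i)\supseteq N(P,Q)\cap\bigcap_{i=3}^{n+1}N(U_i,V_i)$, an intersection of only $n$ members of the family, which is nonempty by the induction hypothesis. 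Hence the finite intersection property holds and every $\mathbb{F}^{(n)}$ is transitive.

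It remains to prove the filter lemma, and this is where I expect the real work and the use of commutativity. First I would use transitivity of $\mathbb{F}\times\mathbb{F}$ on the boxes $U_1\times U_2$ and $V_1\times V_2$ to produce a single time $m$ with $\omega_m(U_i)\cap V_i\neq\phi$, and pass to the robust returns $\tilde U_i=U_i\cap\omega_m^{-1}(V_i)$, so that $\omega_m(\tilde U_i)\subseteq V_i$ with $\tilde U_i\subseteq U_i$. The crux is then to transport the return condition for the second pair onto the timeline of the first, so that one set $N(P,Q)$ forces both returns simultaneously. In the classical autonomous proof this transport rests on the semigroup identity $T^{a+b}=T^a\circ T^b$, which lets return times be shifted and composed. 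For a non-autonomous family this identity fails, since the tail map $f_{a+b}\circ\cdots\circ f_{a+1}$ is not an initial segment $\omega_b$; this mismatch is the main obstacle. The hypothesis that $\mathbb{F}$ is commutative is introduced precisely to repair it: since all members of $\mathbb{F}$ commute, any two composites built from them commute, so $\omega_k$ commutes with the relevant tail maps, restoring exactly the interchange of compositions needed to shift and merge the two return conditions into a single pair $(P,Q)$. Carrying out this construction completes the filter lemma and hence the proof; the factor-projection observation from the base case also offers an alternative finish, reducing all orders to the doubled orders $\mathbb{F}^{(2^k)}$.
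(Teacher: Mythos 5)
Your overall architecture is sound and is in fact the same mechanism the paper uses: the only non-trivial direction is a Furstenberg-type merge of return-time sets, with commutativity of $\mathbb{F}$ standing in for the semigroup identity $T^{a+b}=T^a\circ T^b$. (The paper does not isolate a filter lemma; it performs exactly this merge on the last two coordinates inside the induction, choosing $r$ with $\omega_r(U_k)\cap U_{k+1}\neq\phi$ and $\omega_r(V_k)\cap V_{k+1}\neq\phi$ and passing to $U=U_k\cap\omega_r^{-1}(U_{k+1})$, $V=V_k\cap\omega_r^{-1}(V_{k+1})$.) Your reduction of $n$-fold transitivity to the finite intersection property of the sets $N(U,V)$, and the induction granted the filter lemma, are correct (apart from the mislabelling of the trivial direction as the ``forward implication'').

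The gap is that the filter lemma itself --- which carries all the content --- is never proved, and the one concrete step you do take sets up the wrong merge. You choose $m$ with $\omega_m(U_i)\cap V_i\neq\phi$ and form $\tilde U_i=U_i\cap\omega_m^{-1}(V_i)$; but if $k$ is a return time of $\tilde U_1$ to $\tilde U_2$, then (using commutativity) one only gets $k\in N(U_1,U_2)\cap N(V_1,V_2)$, i.e. this construction merges the pairs $(U_1,U_2)$ and $(V_1,V_2)$, not the required $(U_1,V_1)$ and $(U_2,V_2)$, and no candidate $(P,Q)$ with $N(P,Q)\subseteq N(U_1,V_1)\cap N(U_2,V_2)$ is ever exhibited. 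The correct merge runs the other way: apply transitivity of $\mathbb{F}\times\mathbb{F}$ to the boxes $U_1\times V_1$ and $U_2\times V_2$ to obtain $m$ with $\omega_m(U_1)\cap U_2\neq\phi$ and $\omega_m(V_1)\cap V_2\neq\phi$, and set $P=U_1\cap\omega_m^{-1}(U_2)$, $Q=V_1\cap\omega_m^{-1}(V_2)$, both non-empty and open. If $k\in N(P,Q)$, pick $x\in P$ with $\omega_k(x)\in Q$; then $x\in U_1$ and $\omega_k(x)\in V_1$ give $k\in N(U_1,V_1)$, while $\omega_m(x)\in U_2$ and $\omega_k(\omega_m(x))=\omega_m(\omega_k(x))\in V_2$ --- this interchange is exactly where pairwise commutativity of the $f_i$ enters, since $\omega_k$ and $\omega_m$ are composites of members of $\mathbb{F}$ --- give $k\in N(U_2,V_2)$. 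With this the filter lemma, and hence your induction, goes through; as written, the proposal asserts the decisive step rather than proving it.
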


\begin{proof}
Let $\mathbb{F}\times\mathbb{F}$ be transitive. We prove the forward
part with the help of mathematical induction. Let
$\underbrace{\mathbb{F}\times\mathbb{F}\times\ldots\times\mathbb{F}}_{k~~
times}$ be transitive and let $U_1,U_2,\ldots,U_{k+1}$ and
$V_1,V_2,\ldots,V_{k+1}$ be a pair of $k+1$ non-empty open sets in
$X$. As $\mathbb{F}\times\mathbb{F}$ is transitive, there exists
$r>0$ such that $\omega_r(U_k)\cap U_{k+1}\neq \phi$ and
$\omega_r(V_k)\cap V_{k+1}\neq \phi$. Let $U= U_k \cap
\omega_r^{-1}(U_{k+1})$ and $V= V_k \cap \omega_r^{-1}(V_{k+1})$.
Then $U$ and $V$ are non-empty open sets in $X$. Also as
$\underbrace{\mathbb{F}\times\mathbb{F}\times\ldots\times\mathbb{F}}_{k~~
times}$ is transitive, there exists $t>0$ such that
$\omega_t(U_i)\cap V_i\neq \phi$ for $i=1,2,\ldots,k-1$ and
$\omega_t(U)\cap V \neq \phi$.

As $U\subset U_k$ and $V\subset V_k$, we have $\omega_t(U_k)\cap
V_k\neq \phi$. Also $\omega_t(U)\cap V \neq \phi$ implies
$\omega_r(\omega_t(U))\cap \omega_r(V)\neq \phi$. As $f_i$ commute
with each other,  we have $\omega_t(\omega_r(U))\cap \omega_r(V)\neq
\phi$. As $\omega_r(U)\subseteq U_{k+1}$ and $\omega_r(V)\subset
V_{k+1}$, we have $\omega_t(U_{k+1})\cap V_{k+1}\neq \phi$.
Consequently $\omega_t(U_i)\cap V_i\neq \phi$ for $i=1,2,\ldots,k+1$
and hence
$\underbrace{\mathbb{F}\times\mathbb{F}\times\ldots\times\mathbb{F}}_{k+1~~
times}$ is transitive.\\

Proof of converse is trivial as if
$\underbrace{\mathbb{F}\times\mathbb{F}\times\ldots\times\mathbb{F}}_{n~~
times}$ is transitive $~~\forall n\geq 2$, in particular taking
$n=2$ yields $\mathbb{F}\times \mathbb{F}$ is transitive.

\end{proof}

\begin{Remark}
For autonomous systems, it is known that $f\times f$ is transitive,
then $\underbrace{f\times f\times\ldots\times f}_{n times}$ is
transitive for all $n\geq 2$\cite{ba} and hence the result
established above is an analogous extension of the autonomous case.
It may be noted that the proof uses the commutative property of the
members of the family $\mathbb{F}$ and hence is not true for a
non-autonomous system generated by any general family $\mathbb{F}$.
However, the proof does not use the finiteness of the family
$\mathbb{F}$ and hence the result holds even when the generating
family  $\mathbb{F}$
is infinite.\\
\end{Remark}

\begin{Lemma}\label{wm2}
If $\mathbb{F}$ is a commutative family, then $(X,\mathbb{F})$ is
weakly mixing if and only if for any finite collection of non-empty
open sets $\{U_1,U_2,\ldots,U_m\}$, there exists a subsequence
$(r_n)$ of positive integers such that $\lim \limits_{n \rightarrow
\infty} \omega_{r_n} (U_i) =X, ~~~\forall i=1,2,\ldots,m$.
\end{Lemma}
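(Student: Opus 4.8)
The plan is to prove both implications, the reverse being elementary and the forward one resting on the higher-product characterization of weak mixing furnished by Lemma \ref{wm1}.

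\textbf{Reverse direction.} Suppose the stated convergence property holds; I must show $\mathbb{F}\times\mathbb{F}$ is transitive, i.e. given nonempty open sets $U_1,U_2,V_1,V_2$ I must produce $n$ with $\omega_n(U_i)\cap V_i\neq\emptyset$ for $i=1,2$. I would apply the hypothesis to the collection $\{U_1,U_2\}$, obtaining a sequence $(r_n)$ with $\omega_{r_n}(U_i)\to X$ in the Hausdorff metric for $i=1,2$. For each $i$ fix a ball $B(v_i,\rho_i)\subseteq V_i$. Once $d_H(\overline{\omega_{r_n}(U_i)},X)<\rho_i$ we have $X\subseteq S_{\rho_i}(\overline{\omega_{r_n}(U_i)})$, so $v_i$ lies within $\rho_i$ of a point of $\omega_{r_n}(U_i)$, which therefore meets $B(v_i,\rho_i)\subseteq V_i$. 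Choosing $n$ large enough to work for both $i=1,2$ supplies the common index, so $(X,\mathbb{F})$ is weakly mixing. Note that commutativity is not needed here.

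\textbf{Forward direction.} Suppose $(X,\mathbb{F})$ is weakly mixing and $\mathbb{F}$ is commutative. Here I would first invoke Lemma \ref{wm1}: weak mixing is transitivity of $\mathbb{F}\times\mathbb{F}$, so by that lemma the $N$-fold product $\underbrace{\mathbb{F}\times\cdots\times\mathbb{F}}_{N}$ is transitive for every $N\ge 2$. The value of $N$-fold transitivity is that it yields a \emph{single} iterate $r$ carrying each of $N$ prescribed source sets into its prescribed target simultaneously. Now fix the collection $\{U_1,\dots,U_m\}$ and $k\in\mathbb{N}$. By compactness cover $X$ by finitely many open balls $B_1,\dots,B_p$ of radius $1/k$, and apply transitivity of the $(mp)$-fold product to the source sets $U_i$ (one copy indexed by each pair $(i,l)$) and the target sets $B_l$. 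This produces $r_k$ with $\omega_{r_k}(U_i)\cap B_l\neq\emptyset$ for all $i\le m$ and $l\le p$. Since $\omega_{r_k}(U_i)$ then meets every ball of the cover, every point of $X$ lies within $2/k$ of $\omega_{r_k}(U_i)$, giving $d_H(\overline{\omega_{r_k}(U_i)},X)\le 2/k$. Letting $k\to\infty$ yields $\omega_{r_k}(U_i)\to X$ for each $i$, the required sequence.

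\textbf{Main obstacle.} The real work is in the forward direction, in two places. The first is packaging the ``meets every $1/k$-ball'' condition as a single instance of $N$-fold transitivity, so that one common $r_k$ serves all pairs $(i,l)$ at once; this is precisely where Lemma \ref{wm1}, and hence the commutativity of $\mathbb{F}$, is indispensable. The second is translating the combinatorial density condition into genuine Hausdorff convergence, which requires passing to closures since $\omega_{r_k}(U_i)$ need not be closed (meeting an open ball is unaffected by closure). A minor bookkeeping point is that the construction delivers one index $r_k$ per density level $1/k$; to present $(r_n)$ as a bona fide strictly increasing subsequence one passes to a subsequence, which leaves the convergence $\omega_{r_n}(U_i)\to X$ intact.
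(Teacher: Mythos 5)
Your proposal is correct and follows essentially the same route as the paper: the forward direction uses Lemma \ref{wm1} to get a single iterate meeting all sets of a finite $1/k$-ball cover simultaneously (hence Hausdorff convergence to $X$), and the converse extracts an index with $\omega_{r_k}(U_i)$ within $\epsilon$ of $X$ to hit balls inside $V_1,V_2$, with commutativity used only in the forward direction. Your extra care about closures and monotonicity of $(r_n)$ are minor refinements of the same argument.
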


\begin{proof}
Let $n\in \mathbb{N}$ be arbitrary and let $\{U_1,U_2,\ldots,U_m\}$
be any finite collection of non-empty open sets of $X$. As $X$ is
compact, there exist $x_1,x_2,\ldots x_{k_n}$ such that
$X=\bigcup\limits_{i=1}^{k_n} S(x_i, \frac{1}{n})$. As
$(X,\mathbb{F})$ is weakly mixing, by lemma \ref{wm1}, there exists
$r_n>0$ such that $\omega_{r_n}(U_i)\cap S(x_j,\frac{1}{n})\neq
\phi~~~ \forall i,j$ and hence for any $i$,
$d_H(\omega_{r_n}(U_i),X)\leq \frac{1}{n}$. As $n\in \mathbb{N}$ is
arbitrary, $\lim \limits_{n \rightarrow \infty} \omega_{r_n}
(U_i)=X~~\forall i~~$ and the proof
for the forward part is complete.\\

Conversely, let $U_1,U_2$ and $V_1,V_2$ be a pair of $2$ non-empty
open subsets of $X$. For $~~~i=1,2$, let $v_i\in V_i$ and let
$\epsilon>0$ such that $S(v_i,\epsilon)\subset V_i$. By given
condition, there exists a subsequence $(r_n)$ of natural numbers
such that $\lim \limits_{n \rightarrow \infty} \omega_{r_n} (U_i)
=X$ for $i=1,2$. Thus, there exists $r_k$ such that
$d_H(\omega_{r_k}(U_i),X)< \frac{\epsilon}{2},~~~i=1,2$.
Consequently $ \omega_{r_k}(U_i)\cap V_i\neq \phi$ and hence
$(X,\mathbb{F})$ is weakly mixing.
\end{proof}

\begin{Remark}
It may be noted that the proof of converse does not need
commutativity of the family $\mathbb{F}$. However, to establish the
forward part, we use lemma \ref{wm1} and hence use the commutativity
of the family $\mathbb{F}$. Thus, the result may not hold good when
considered for a general non-autonomous system. Also, the result
does not use finiteness condition on $\mathbb{F}$ and hence is valid
even when the system is generated by an infinite family
$\mathbb{F}$.
\end{Remark}

\begin{Remark}
It is known that an autonomous system is weakly mixing if and only
if for any non-empty open set $U$, there exists a subsequence
$(r_n)$ of positive integers such that $\lim \limits_{n \rightarrow
\infty} \omega_{r_n} (U) =X$ \cite{do}.  Thus for non-autonomous
case, the result above establishes an stronger extension of the
result proved in the autonomous case. However, the above result also
holds when the maps $f_n$ coincide and hence a stronger version of
the result in \cite{do} is true for the autonomous case. For the
sake of completeness, we mention the obtained result below.
\end{Remark}

\begin{Cor}
A continuous self map $f$ is weakly mixing if and only if for any
finite collection of non-empty open sets $\{U_1,U_2,\ldots,U_m\}$,
there exists a subsequence $(r_n)$ of positive integers such that
$\lim \limits_{n \rightarrow \infty} \omega_{r_n} (U_i) =X,
~~~\forall i=1,2,\ldots,m$.
\end{Cor}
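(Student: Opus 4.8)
The plan is to obtain this corollary as an immediate specialization of Lemma \ref{wm2} rather than to prove it afresh. The key observation is that a single continuous self map $f$ on $X$ fits into the framework of the paper by taking the constant family $\mathbb{F}=\{f_n : f_n=f \text{ for all } n\in\mathbb{N}\}$. For this constant family the composition $\omega_n = f_n\circ f_{n-1}\circ\ldots\circ f_1$ reduces to the ordinary iterate $f^n$, so the orbit structure of the non-autonomous system $(X,\mathbb{F})$ coincides exactly with that of the autonomous system $(X,f)$; in particular $\omega_{r_n}(U_i)=f^{r_n}(U_i)$ for every subsequence $(r_n)$ and every open set $U_i$.

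First I would verify that the constant family $\mathbb{F}=\{f,f,f,\ldots\}$ is commutative in the sense required by Lemma \ref{wm2}. This is immediate, since all members are equal and $f\circ f=f\circ f$, so any two members commute trivially; hence the commutativity hypothesis of Lemma \ref{wm2} is automatically satisfied. Next I would check that the two notions of weak mixing agree: for the constant family the defining condition $\omega_n(U_i)\cap V_i\neq\phi$ for $i=1,2$ is precisely the standard condition $f^n(U_i)\cap V_i\neq\phi$, so $(X,\mathbb{F})$ is weakly mixing if and only if $f$ is weakly mixing as an autonomous map.

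With these two identifications in place, Lemma \ref{wm2} applied to this constant commutative family yields exactly the stated equivalence, after rewriting each occurrence of $\omega_{r_n}(U_i)$ as $f^{r_n}(U_i)$. I expect essentially no obstacle here: the only points requiring care are the bookkeeping identifications just described, namely that a constant family is automatically commutative and that $\omega_n=f^n$ in this case, both of which are routine. The substantive content of the result lives entirely in Lemma \ref{wm2}, whose forward direction already carries out the compactness argument (covering $X$ by finitely many balls of radius $\tfrac{1}{n}$ and extracting the index $r_n$) and whose converse, as noted in the accompanying remark, needs no commutativity at all.
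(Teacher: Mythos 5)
Your proposal is correct and follows exactly the paper's own route: the corollary is obtained by specializing Lemma \ref{wm2} to the constant (hence trivially commutative) family $f_n=f$, under which $\omega_n=f^n$ and weak mixing of $(X,\mathbb{F})$ coincides with weak mixing of $f$. The identifications you check are precisely the ones the paper relies on in the remark preceding the corollary.
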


\begin{Lemma}
$(X,\mathbb{F})$ is topologically mixing if and only if for each
non-empty open set $U$, $\lim \limits_{n \rightarrow \infty}
\omega_{n}(U) =X$.
\end{Lemma}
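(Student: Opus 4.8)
The plan is to unwind the Hausdorff-metric convergence $\lim_{n\to\infty}\omega_n(U)=X$ into an explicit $\epsilon$-density statement and then argue the two implications separately. Since $\omega_n(U)\subseteq X$, the containment $\omega_n(U)\subseteq S_\epsilon(X)$ holds trivially, so $d_H(\omega_n(U),X)<\epsilon$ is equivalent to $X\subseteq S_\epsilon(\omega_n(U))$; that is, $\omega_n(U)$ meets every ball of radius $\epsilon$ in $X$. Thus the condition ``$\lim_{n\to\infty}\omega_n(U)=X$'' says precisely that for every $\epsilon>0$ there is an $N$ such that $\omega_n(U)$ is $\epsilon$-dense in $X$ for all $n\geq N$.

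For the backward implication I would argue as in the converse of Lemma \ref{wm2}. Given non-empty open sets $U,V$, pick $v\in V$ and $\epsilon>0$ with $S(v,\epsilon)\subseteq V$. The hypothesis applied to $U$ yields $N$ such that $\omega_n(U)$ is $\epsilon$-dense for all $n\geq N$; in particular $\omega_n(U)\cap S(v,\epsilon)\neq\phi$, whence $\omega_n(U)\cap V\neq\phi$ for every $n\geq N$. This is exactly topological mixing.

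For the forward implication I would use compactness to convert the single-ball mixing condition into uniform $\epsilon$-density. Fix a non-empty open $U$ and $\epsilon>0$, and cover $X$ by finitely many balls $S(x_1,\epsilon),\ldots,S(x_k,\epsilon)$. Applying topological mixing to each pair $(U,S(x_j,\epsilon))$ produces $K_j$ with $\omega_n(U)\cap S(x_j,\epsilon)\neq\phi$ for all $n\geq K_j$; setting $K=\max_j K_j$ gives a single threshold that works for all $j$ simultaneously. Then for $n\geq K$ any $y\in X$ lies in some $S(x_j,\epsilon)$, and the corresponding intersection point of $\omega_n(U)$ sits within $2\epsilon$ of $y$, so that $d_H(\omega_n(U),X)\leq 2\epsilon$. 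Since $\epsilon$ is arbitrary, $\omega_n(U)\to X$. The one genuine obstacle is precisely this passage from the per-open-set mixing condition to a uniform bound valid for all large $n$ at once; it is compactness, via the reduction to finitely many balls and the maximum $K=\max_j K_j$, that makes the uniformity possible. Note that neither commutativity nor finiteness of $\mathbb{F}$ is required.
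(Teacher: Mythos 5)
Your proposal is correct and follows essentially the same route as the paper: the forward direction covers $X$ by finitely many $\epsilon$-balls, applies the mixing condition to each pair $(U,S(x_j,\epsilon))$, and takes the maximum threshold to get uniform $\epsilon$-density (hence Hausdorff convergence), while the converse picks a ball $S(v,\epsilon)\subseteq V$ and uses the eventual $\epsilon$-density of $\omega_n(U)$. Your explicit reduction of $d_H(\omega_n(U),X)<\epsilon$ to one-sided $\epsilon$-density, and the honest $2\epsilon$ bound, are slightly more careful than the paper's write-up but do not change the argument.
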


\begin{proof}
Let $n\in \mathbb{N}$ be arbitrary and let $U$ be any non-empty open
subset of $X$. As $X$ is compact, there exist $x_1,x_2,\ldots
x_{k_n}$ such that $X=\bigcup\limits_{i=1}^{k_n} S(x_i,
\frac{1}{n})$. As $\mathbb{F}$ is topologically mixing, there exists
$M_i,~~i=1,2,\ldots,k_n$ such that $\omega_k(U)\cap
S(x_i,\frac{1}{n}) \neq \phi~~~ \forall k\geq M_i$. Let $M=\max\{M_i
: 1\leq i\leq k_n\}$. Then $\omega_k(U)\cap S(x_i,\frac{1}{n}) \neq
\phi~~~ \forall k\geq M$. Consequently
$d_H(\omega_k(U),X)<\frac{1}{n} ~~~ \forall k\geq M$. As $n\in
\mathbb{N}$ is arbitrary, $\lim \limits_{n \rightarrow \infty}
\omega_{n}(U) =X$ and the proof of forward part is complete.

Conversely, let $U,V$ be a any pair of non-empty open subsets of
$X$. Let $v\in V$ and let $\epsilon>0$ be such that
$S(v,\epsilon)\subset V$. By given condition, $\lim \limits_{n
\rightarrow \infty} \omega_{n}(U) =X$. Thus, there exists $K>0$ such
that $d_H(\omega_{k}(U),X)< \frac{\epsilon}{2}~~~\forall k\geq K$.
Consequently $ \omega_{k}(U)\cap V\neq \phi~~~\forall n\geq K$ and
hence $(X,\mathbb{F})$ is topologically mixing.
\end{proof}
\vskip 0.25cm

\begin{Remark}
In \cite{do}, the authors establish that an autonomous system
$(X,f)$ is topologically mixing if and only if for each non-empty
open set $U$, $\lim \limits_{n \rightarrow \infty} f^{n}(U) =X$.
Once again, we prove that an analogous result does hold when
considered for a general non-autonomous system. However, it may be
noted that commutativity or finiteness of the family $\mathbb{F}$
were not needed to establish the above result and hence the result
holds for a general non-autonomous dynamical system.
\end{Remark}
\vskip 0.25cm

\begin{Lemma}
\label{lem18} If $\mathbb{F}=\{f_1,f_2,\ldots,f_n\}$ is a finite
commutative family, then, $(X,\mathbb{F})$ is weakly mixing if and
only if $(X,f_n\circ f_{n-1}\circ\ldots\circ f_1)$ is weakly mixing.
\end{Lemma}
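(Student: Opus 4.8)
The plan is to establish the two implications separately; writing $g=f_n\circ f_{n-1}\circ\cdots\circ f_1$, the forward implication is routine while the converse carries the real content.

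For the direction ``$(X,g)$ weakly mixing $\Rightarrow (X,\mathbb{F})$ weakly mixing'', I would use the identity $\omega_{nk}=g^k$, which holds because the generating maps repeat with period $n$, so that $f_{nk}\circ\cdots\circ f_1=(f_n\circ\cdots\circ f_1)^k$. Given four non-empty open sets $U_1,U_2,V_1,V_2$, weak mixing of $g$ means $g\times g$ is transitive, so there is a $k$ with $g^k(U_i)\cap V_i\neq\phi$ for $i=1,2$; taking the natural number $nk$ then yields $\omega_{nk}(U_i)\cap V_i\neq\phi$, which is precisely weak mixing of $(X,\mathbb{F})$. This half needs neither commutativity nor Lemma \ref{wm2}.

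For the converse the obstruction is that weak mixing of $(X,\mathbb{F})$ produces good behaviour at times $m$ which need not be multiples of $n$, whereas the autonomous dynamics of $g$ only sees times that are multiples of $n$. To bridge this I would first invoke Lemma \ref{wm2} (this is the only place commutativity enters): given a finite collection $U_1,\ldots,U_m$ of non-empty open sets, there is a subsequence $(r_j)$ with $\omega_{r_j}(U_i)\to X$ for every $i$. Writing $r_j=q_j n+s_j$ with $0\le s_j<n$ and applying the pigeonhole principle to the finitely many possible residues, I would pass to a further subsequence along which $s_j$ equals a fixed value $s$ (and $q_j\to\infty$). Decomposing $g=\tilde h\circ h_s$ with $h_s=f_s\circ\cdots\circ f_1$ and $\tilde h=f_n\circ\cdots\circ f_{s+1}$, and using $\omega_{r_j}=h_s\circ g^{q_j}$, I obtain the key identity $g^{q_j+1}=g\circ g^{q_j}=\tilde h\circ h_s\circ g^{q_j}=\tilde h\circ\omega_{r_j}$, whence $g^{q_j+1}(U_i)=\tilde h(\omega_{r_j}(U_i))$.

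The final step is a short observation I would prove inline: if $A_j\to X$ in the Hausdorff metric and $\phi:X\to X$ is a continuous surjection of a compact space, then $\phi(A_j)\to X$; this follows from uniform continuity of $\phi$ together with surjectivity, since an $\epsilon$-approximation of any $y=\phi(x)$ is produced from a $\delta$-approximation of $x$ inside $A_j$. Applying this with the surjective map $\phi=\tilde h$ gives $g^{q_j+1}(U_i)\to X$ for all $i$, so the subsequence $(q_j+1)$ witnesses the autonomous density criterion and, by the Corollary (equivalently, by noting that $g^{q_j+1}(U_i)\cap V_i\neq\phi$ for large $j$ so that $g\times g$ is transitive), the map $g$ is weakly mixing. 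I expect the main obstacle to be exactly this remainder issue: recognizing that the leftover block $\tilde h$ can be absorbed by advancing one extra period, and that surjectivity of the $f_i$ makes $\tilde h$ preserve asymptotic density. Commutativity, by contrast, is used only indirectly through Lemma \ref{wm2}.
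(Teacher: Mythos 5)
Your proof is correct and takes essentially the same route as the paper: the easy direction via the identity $\omega_{nk}=g^k$, and the hard direction via Lemma \ref{wm2} (the only place commutativity enters), pigeonholing the residues of the good times modulo $n$, and absorbing the leftover block $f_n\circ\cdots\circ f_{s+1}$ by advancing one extra period, using surjectivity and continuity. Your inline observation that a continuous surjection preserves Hausdorff convergence to $X$ merely makes explicit a step the paper leaves implicit.
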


\begin{proof}
Let $U$ be a non-empty open subset of $X$. We will equivalently
prove that there exists a sequence $(z_k)$ of natural numbers such
that $\lim \limits_{k\rightarrow \infty} (f_n\circ
f_{n-1}\circ\ldots\circ f_1)^{z_k}(U)=X$. As $(X,\mathbb{F})$ is
weakly mixing, by lemma $5$, there exists sequence $(s_k)$ such that
$\lim \limits_{k\rightarrow \infty} \omega_{s_k}(U)=X$. Also the
family $\mathbb{F}$ is finite and hence there exists $l\in
\{1,2,\ldots,n\}$ and a subsequence $(m_k)$ of $(s_k)$, $m_k =
l+r_kn~~$ such that $\lim \limits_{k\rightarrow \infty} f_l\circ
f_{l-1}\circ\ldots\circ f_1\circ \omega_{r_k n}(U)=X$. As each $f_i$
are surjective, $\lim \limits_{k\rightarrow \infty} \omega_{(r_k+1)
n}(U)=X$. Consequently $\lim \limits_{k\rightarrow \infty} (f_n\circ
f_{n-1}\circ\ldots\circ f_1)^{r_k+1}(U)=X$
and $(X,f_n\circ f_{n-1}\circ\ldots\circ f_1)$ is weakly mixing.\\

Conversely, let $U_1,U_2,V_1,V_2$ be any two pairs of non-empty open
subsets of $X$, As $f_n\circ f_{n-1}\circ\ldots\circ f_1$ is weakly
mixing, there exists $k\in \mathbb {N}$ such that $(f_n\circ
f_{n-1}\circ\ldots\circ f_1)^k(U_i)\cap V_i \neq \phi$ for $i=1,2$.
Consequently $\omega_{nk}(U_i)\cap V_i\neq \phi$ for $i=1,2$ and
hence $(X,\mathbb{F})$ is weakly mixing.
\end{proof}
\vskip 0.25cm
\begin{Remark}
The result establishes the equivalence of the weakly mixing of the
non-autonomous system $(X,\mathbb{F})$ and the autonomous system
$(X,f_n\circ f_{n-1}\circ\ldots\circ f_1)$. It may be noted that as
the proof uses the lemma $5$ proved earlier, commutativity of the
family $\mathbb{F}$ cannot be relaxed. Thus the result may not hold
good if the assumptions in the hypothesis are relaxed.
\end{Remark}
\vskip 0.25cm

\begin{Remark}
It may be noted that the above result uses the surjectivity of the
maps $f_i$. Thus, if the maps are not surjective, the above result
may not hold, i.e. the non-autonomous system may exhibit weakly
mixing even if the system $(X,f_n\circ f_{n-1}\circ\ldots\circ f_1)$
is not weakly mixing. We now give an example in support of our
statement.
\end{Remark}

\begin{ex}
Let $I$ be the unit interval and let $f_1,f_2$ be defined as\\

$f_1(x) = \left\{%
\begin{array}{ll}
            2x  & \text{for x} \in [0, \frac{1}{2}] \\
           -x+\frac{3}{2} & \text{for x} \in [\frac{1}{2},1] \\
\end{array} \right.$

$f_2(x) = \left\{%
\begin{array}{ll}
        -2x+\frac{1}{2}             & \text{for x} \in [0, \frac{1}{4}] \\
         2x-\frac{1}{2}             & \text{for x} \in [\frac{1}{4},\frac{1}{2}] \\
         -2x+\frac{3}{2}             & \text{for x} \in [\frac{1}{2},\frac{3}{4}] \\
         2x-\frac{3}{2}             & \text{for x} \in [\frac{3}{4},1] \\
\end{array} \right.$ \\

Let $\mathbb{F}$ be a finite family of maps $f_1$ and $f_2$ defined
above. As $[0,\frac{1}{2}]$ is invariant for $f_2\circ f_1$, the map
$f_2\circ f_1$ does not exhibit any of the mixing properties.
However, for any open set $U$ in $[0,1]$, there exists $k\in
\mathbb{N}$ such that $(f_2\circ f_1)^k (U)=[0,\frac{1}{2}]$.
Consequently, $\omega_{2k+1}(U)=[0,1]$. As the argument holds for
any  odd integer greater than $k$, the non-autonomous system is
weakly mixing.

\end{ex}

\begin{Lemma}
\label{lem20} If $\mathbb{F}=\{f_1,f_2,\ldots,f_n\}$ is a finite
family, then, $(X,\mathbb{F})$ is topologically mixing if and only
if $(X,f_n\circ f_{n-1}\circ\ldots\circ f_1)$ is topologically
mixing.
\end{Lemma}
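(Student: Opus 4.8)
The plan is to exploit the periodic structure of a finite family together with the Hausdorff-metric characterization of topological mixing established in the preceding lemma, namely that $(X,\mathbb{F})$ is topologically mixing if and only if $\lim_{m\to\infty}\omega_m(U)=X$ for every non-empty open set $U$, together with the analogous autonomous statement of \cite{do} applied to $F:=f_n\circ f_{n-1}\circ\cdots\circ f_1$. First I would record the decomposition coming from $f_k=f_{1+((k-1)\bmod n)}$: writing any $m=qn+l$ with $0\le l<n$, one has $\omega_{qn+l}=g_l\circ F^{q}$, where $g_l:=f_l\circ f_{l-1}\circ\cdots\circ f_1$ (and $g_0:=\mathrm{id}$), and in particular $\omega_{nk}=F^{k}$.

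The forward implication is then immediate: if $(X,\mathbb{F})$ is topologically mixing, then $\lim_{m\to\infty}\omega_m(U)=X$ for every open $U$, and passing to the subsequence $m=nk$ gives $\lim_{k\to\infty}F^{k}(U)=\lim_{k\to\infty}\omega_{nk}(U)=X$, so $(X,F)$ is topologically mixing.

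For the converse, suppose $(X,F)$ is topologically mixing, so that $\lim_{q\to\infty}F^{q}(U)=X$ in the Hausdorff metric for every open $U$. Fix a residue $l$. Because each $f_i$ is surjective, $g_l$ is a continuous surjection, whence $g_l(X)=X$, and the induced set map $A\mapsto g_l(A)$ is continuous on $(CL(X),d_H)$ (this follows from uniform continuity of $g_l$ on the compact space $X$). Applying this to the convergent sequence $F^{q}(U)\to X$ yields $\omega_{qn+l}(U)=g_l\bigl(F^{q}(U)\bigr)\to g_l(X)=X$ as $q\to\infty$. Since there are only finitely many residues $l\in\{0,1,\ldots,n-1\}$ and each residue subsequence tends to $X$, the full sequence satisfies $\lim_{m\to\infty}\omega_m(U)=X$; by the characterization lemma, $(X,\mathbb{F})$ is topologically mixing.

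The main obstacle — and the only place the hypotheses really bite — is the converse, where two points must be handled with care. The surjectivity of the $f_i$ (assumed throughout the Main Results section) is exactly what guarantees $g_l(X)=X$; without it, $g_l\bigl(F^{q}(U)\bigr)$ could converge to a proper closed subset and mixing would fail. The passage from convergence along each of the $n$ arithmetic progressions to convergence of the whole sequence is legitimate precisely because the family is finite, giving finitely many residue classes over which one may take a common threshold. Should one prefer to avoid the Hausdorff characterization, the same decomposition yields a direct argument: for open $U,V$, the set $g_l^{-1}(V)$ is open and non-empty by continuity and surjectivity of $g_l$, mixing of $F$ gives $K_l$ with $F^{q}(U)\cap g_l^{-1}(V)\neq\emptyset$ for $q\ge K_l$, equivalently $\omega_{qn+l}(U)\cap V\neq\emptyset$, and taking $K=n\max_l K_l+n$ delivers $\omega_m(U)\cap V\neq\emptyset$ for all $m\ge K$.
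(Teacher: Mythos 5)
Your proposal is correct and follows essentially the same route as the paper: the forward direction passes to the subsequence $m=nk$ in the Hausdorff characterization, and the converse decomposes $\omega_{qn+l}=g_l\circ F^{q}$ and uses surjectivity plus continuity of the induced maps on the hyperspace to push $F^{q}(U)\to X$ through each residue class. You are merely more explicit than the paper about justifying the hyperspace continuity via uniform continuity and about assembling the finitely many residue subsequences into full-sequence convergence (and your closing $\omega_m(U)\cap V\neq\emptyset$ argument is a nice self-contained alternative), but the underlying argument is the same.
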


\begin{proof}
Let $U$ be a non-empty open subset of $X$. We will equivalently
prove that $\lim \limits_{k\rightarrow \infty} (f_n\circ
f_{n-1}\circ\ldots\circ f_1)^k(U)=X$. As $(X,\mathbb{F})$ is
topologically mixing, by lemma $8$, $\lim \limits_{k\rightarrow
\infty} \omega_{k}(U)=X$. In particular $\lim \limits_{k\rightarrow
\infty} \omega_{nk}(U)=X$ or $\lim \limits_{k\rightarrow \infty}
(f_n\circ f_{n-1}\circ\ldots\circ f_1)^k(U)=X$ and hence
$(X,f_n\circ
f_{n-1}\circ\ldots\circ f_1)$ is topologically mixing.\\

Conversely, $U$ be a non-empty open subset of $X$. We will
equivalently prove that $\lim \limits_{k\rightarrow \infty}
\omega_k(U)=X$. As $f_n\circ f_{n-1}\circ\ldots\circ f_1$ is
topologically mixing, $\lim \limits_{k\rightarrow \infty} (f_n\circ
f_{n-1}\circ\ldots\circ f_1)^k(U)=X$. Consequently, $\lim
\limits_{k\rightarrow \infty} \omega_{nk}(U)=X$. As each $f_i$ are
surjective, by continuity we have for each $l\in \{1,2,\ldots,n\}$,
$f_l\circ f_{l-1}\circ\ldots \circ f_1 (\lim \limits_{k\rightarrow
\infty} \omega_{nk}(U))= \lim \limits_{k\rightarrow \infty}
(f_l\circ f_{l-1}\circ\ldots \circ f_1\circ \omega_{nk}(U))= X$.
Consequently $\lim \limits_{k\rightarrow \infty} \omega_k(U)=X$ and
$(X,\mathbb{F})$ is topologically mixing.
\end{proof}

\begin{Remark}
The result once again is an analogous extension of the autonomous
case. The result proves that the identical conclusion can be made
for the non-autonomous case without strengthening the hypothesis. It
is worth noting that the result does not use commutativity of
$\mathbb{F}$ and hence asserts the complex nature of a topological
mixing in a general dynamical system.
\end{Remark}

\noindent

In \cite{sk1}, authors prove that for
$\mathbb{F}=\{f_1,f_2,\ldots,f_n\}$ is a finite family, then,
$h(\mathbb{F})= \frac{1}{n} h(f_n\circ f_{n-1}\circ\ldots\circ
f_1)$. However, as the authors of this paper were not aware of the
result while addressing the problem, for the sake of completion, we
include the proof here.

\begin{Lemma}
If $\mathbb{F}=\{f_1,f_2,\ldots,f_n\}$ is a finite family, then,
$h(\mathbb{F})\geq \frac{1}{n} h(f_n\circ f_{n-1}\circ\ldots\circ
f_1)$. Consequently if the associated autonomous system has positive
topological entropy, the non-autonomous system also has a positive
topological entropy.
\end{Lemma}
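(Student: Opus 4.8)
The plan is to fix an arbitrary open cover $\mathcal{U}$ and prove the pointwise inequality $h_{\mathbb{F},\mathcal{U}}\geq \frac{1}{n}h_{g,\mathcal{U}}$, where $g=f_n\circ f_{n-1}\circ\ldots\circ f_1$, and then take the supremum over all $\mathcal{U}$. The starting observation is the identity $\omega_{jn}=g^{j}$ for every $j\geq 0$, which follows from the periodic labelling $f_k=f_{(1+(k-1)\bmod n)}$ of a finite family; consequently $\omega_{jn}^{-1}=g^{-j}$.

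First I would restrict the defining $\limsup$ for $h_{\mathbb{F},\mathcal{U}}$ to the subsequence $k=Nn$. For such $k$ the join $\mathcal{U}\vee\omega_1^{-1}(\mathcal{U})\vee\ldots\vee\omega_{Nn-1}^{-1}(\mathcal{U})$ contains, among its factors, exactly the terms indexed by the multiples $0,n,2n,\ldots,(N-1)n$, namely $\mathcal{U},\,g^{-1}(\mathcal{U}),\ldots,g^{-(N-1)}(\mathcal{U})$. Since adjoining further open covers to a join can only refine it, the autonomous join is coarser than the full non-autonomous join, and the monotonicity $\alpha\prec\beta\Rightarrow H(\alpha)\leq H(\beta)$ recorded in the introduction gives
\[ H\big(\mathcal{U}\vee g^{-1}(\mathcal{U})\vee\ldots\vee g^{-(N-1)}(\mathcal{U})\big)\leq H\big(\mathcal{U}\vee\omega_1^{-1}(\mathcal{U})\vee\ldots\vee\omega_{Nn-1}^{-1}(\mathcal{U})\big). \]

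Next I would divide both sides by $Nn$, rewrite the left-hand side as $\frac{1}{n}\cdot\frac{1}{N}H(\cdots)$, and pass to $\limsup$ as $N\to\infty$. The left-hand $\limsup$ is precisely $\frac{1}{n}h_{g,\mathcal{U}}$ (here $N$ ranges over all positive integers, so no subsequence loss occurs on the left), while the right-hand side is a $\limsup$ along the subsequence $k=Nn$ of the sequence defining $h_{\mathbb{F},\mathcal{U}}$; because a subsequential limit superior never exceeds the full limit superior, it is bounded above by $h_{\mathbb{F},\mathcal{U}}$. This yields $\frac{1}{n}h_{g,\mathcal{U}}\leq h_{\mathbb{F},\mathcal{U}}$ for every $\mathcal{U}$, and taking suprema gives $\frac{1}{n}h(g)\leq h(\mathbb{F})$. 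The concluding statement is then immediate, since $h(g)>0$ forces $h(\mathbb{F})\geq\frac{1}{n}h(g)>0$.

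The step requiring the most care is the bookkeeping in the subsequence argument: one must verify that the index range $0,1,\ldots,Nn-1$ genuinely captures all $N$ autonomous factors $g^{-j}$ for $0\leq j\leq N-1$ with no boundary term dropped, and that the finiteness (and the periodic relabelling) of $\mathbb{F}$ is exactly what legitimizes $\omega_{jn}=g^{j}$. Everything else reduces to refinement monotonicity of $H$ and to the elementary fact that a subsequential $\limsup$ is dominated by the full $\limsup$.
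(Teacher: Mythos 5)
Your proposal is correct and follows essentially the same route as the paper: restrict attention to times that are multiples of $n$, use $\omega_{jn}^{-1}=(f_n\circ\cdots\circ f_1)^{-j}$ to see the autonomous join sitting inside the non-autonomous join, apply refinement monotonicity of $H$, divide by $Nn$, and pass to the limit before taking the supremum over covers. If anything, your handling of the limit is slightly cleaner: by working with $\limsup$ (matching the paper's definition) the subsequential bound $\limsup_{N} \leq h_{\mathbb{F},\mathcal{U}}$ is automatic, whereas the paper switches to $\liminf$ and asserts without justification that the $\liminf$ along the subsequence $k=Nn$ equals the full $\liminf$.
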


\begin{proof}
For any open cover $\mathcal{U}$ of $X$, the entropy of the system
with respect to the open cover $\mathcal{U}$ is defined as\\

\noindent $h _{\mathbb{F}, \mathcal{U}} = \liminf \limits_{k
\rightarrow \infty} \frac{H( \mathcal{U} \vee
\omega_1^{-1}(\mathcal{U}) \vee \omega_2^{-1}(\mathcal{U}) \vee
\ldots \vee \omega_{k-1}^{-1}(\mathcal{U}))}{k}= \liminf \limits_{k
\rightarrow \infty} \frac{H( \mathcal{U} \vee
\omega_1^{-1}(\mathcal{U}) \vee \omega_2^{-1}(\mathcal{U}) \vee
\ldots \vee
\omega_{nk-1}^{-1}(\mathcal{U}))}{nk}$ \\

\noindent Also as $\mathcal{U} \vee \omega_{n}^{-1}(\mathcal{U})
\vee \omega_{2n}^{-1}(\mathcal{U}) \vee \ldots \vee
\omega_{n(k-1)}^{-1}(\mathcal{U}) \prec \mathcal{U} \vee
\omega_1^{-1}(\mathcal{U}) \vee \omega_2^{-1}(\mathcal{U}) \vee
\ldots \vee \omega_{nk-1}^{-1}(\mathcal{U})$, we have \\

\noindent $H(\mathcal{U} \vee \omega_{n}^{-1}(\mathcal{U}) \vee
\omega_{2n}^{-1}(\mathcal{U}) \vee \ldots \vee
\omega_{n(k-1)}^{-1}(\mathcal{U})) \leq H( \mathcal{U} \vee
\omega_1^{-1}(\mathcal{U}) \vee \omega_2^{-1}(\mathcal{U}) \vee
\ldots \vee \omega_{nk-1}^{-1}(\mathcal{U}))$\\

Therefore, \\ $\liminf \limits_{k\rightarrow \infty} \frac{
H(\mathcal{U} \vee \omega_{n}^{-1}(\mathcal{U}) \vee
\omega_{2n}^{-1}(\mathcal{U}) \vee \ldots \vee
\omega_{n(k-1)}^{-1}(\mathcal{U}))}{nk} \leq \liminf
\limits_{k\rightarrow \infty} \frac{H( \mathcal{U} \vee
\omega_1^{-1}(\mathcal{U}) \vee \omega_2^{-1}(\mathcal{U}) \vee
\ldots \vee \omega_{nk-1}^{-1}(\mathcal{U}))}{nk}$\\ Consequently,\\
$\frac{1}{n} \liminf \limits_{k\rightarrow \infty} \frac{H(
\mathcal{U} \vee (f_n\circ f_{n-1}\circ\ldots\circ
f_1)^{-1}(\mathcal{U}) \vee (f_n\circ f_{n-1}\circ\ldots\circ
f_1)^{-2}(\mathcal{U}) \vee \ldots \vee (f_n\circ
f_{n-1}\circ\ldots\circ f_1)^{(-k+1)}(\mathcal{U}))}{k}\\
 \leq \liminf
\limits_{k\rightarrow \infty} \frac{H( \mathcal{U} \vee
\omega_1^{-1}(\mathcal{U}) \vee \omega_2^{-1}(\mathcal{U}) \vee
\ldots \vee \omega_{nk-1}^{-1}(\mathcal{U}))}{nk}$\\ or $\frac{1}{n}
H(f_n\circ f_{n-1}\circ\ldots\circ f_1, \mathcal{U})\leq
H(\mathbb{F},\mathcal{U})$. As $\mathcal{U}$ was arbitrary,
$h(\mathbb{F})\geq \frac{1}{n} h(f_n\circ f_{n-1}\circ\ldots\circ
f_1)$ and the proof is complete.
\end{proof}

\begin{Lemma}
$(X,f_n\circ f_{n-1}\circ\ldots\circ f_1)$ is Li-Yorke chaotic
$\Rightarrow$ $(X,\mathbb{F})$ is Li-Yorke chaotic.
\end{Lemma}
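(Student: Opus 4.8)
The plan is to show that the \emph{same} uncountable scrambled set serves both systems, exploiting the fact that the autonomous map $g = f_n\circ f_{n-1}\circ\ldots\circ f_1$ is precisely the non-autonomous system sampled at the times that are multiples of $n$. Concretely, the generating rule $f_k = f_{1+(k-1)\bmod n}$ yields the structural identity $\omega_{nk} = (f_n\circ f_{n-1}\circ\ldots\circ f_1)^k = g^k$ for every $k$, so the $g$-orbit of any point $x$ coincides with $\{\omega_{nk}(x)\}_k$, which is a subsequence of the full non-autonomous orbit $\{\omega_m(x)\}_m$.

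First I would take $S$ to be an uncountable scrambled set for the autonomous system $(X,g)$, which exists by hypothesis, and fix two distinct points $x,y\in S$. Applying the definition of scrambling for $(X,g)$ and the identity $\omega_{nk}=g^k$, these points satisfy
\[
\limsup_{k\to\infty} d(\omega_{nk}(x),\omega_{nk}(y)) > 0 \quad\text{and}\quad \liminf_{k\to\infty} d(\omega_{nk}(x),\omega_{nk}(y)) = 0 .
\]
Next I would transfer each condition from the sampled index to the full non-autonomous index. Since $(\omega_{nk}(\cdot))_k$ is a subsequence of $(\omega_m(\cdot))_m$, the elementary comparison of a sequence with its subsequence gives
\[
\limsup_{m\to\infty} d(\omega_m(x),\omega_m(y)) \ge \limsup_{k\to\infty} d(\omega_{nk}(x),\omega_{nk}(y)) > 0 ,
\]
which settles the $\limsup$ requirement. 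For the $\liminf$, the same comparison yields $\liminf_{m} d(\omega_m(x),\omega_m(y)) \le \liminf_{k} d(\omega_{nk}(x),\omega_{nk}(y)) = 0$; combined with the nonnegativity of $d$, the full-sequence $\liminf$ must therefore equal $0$. Thus $\{x,y\}$ is a scrambled pair for $(X,\mathbb{F})$, and since $x,y\in S$ were arbitrary distinct points, $S$ is itself an uncountable scrambled set for the non-autonomous system, so $(X,\mathbb{F})$ is Li-Yorke chaotic.

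There is essentially no serious obstacle here: the argument rests entirely on the single identity $\omega_{nk}=g^k$ together with the fact that passing to a sparser sample can only lower a $\limsup$ and raise a $\liminf$. The only point deserving a moment's care is the $\liminf$ direction, where one must combine the subsequence inequality with nonnegativity of the metric to conclude that the full-sequence $\liminf$ is exactly zero rather than merely bounded above by zero. I would not expect finiteness or surjectivity of $\mathbb{F}$ to enter beyond what is already needed to make sense of $g=f_n\circ\cdots\circ f_1$ and the periodic labeling of the maps.
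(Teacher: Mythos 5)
Your proof is correct and follows essentially the same route as the paper: both take the same uncountable scrambled set for $g=f_n\circ\cdots\circ f_1$, use the identity $\omega_{nk}=g^k$, and observe that the scrambling conditions transfer to the full non-autonomous orbit because the sampled times $nk$ form a subsequence (the paper phrases this by extracting realizing subsequences $(r_k)$, $(s_k)$, while you invoke the subsequence inequalities for $\limsup$ and $\liminf$ directly, which is the same idea). No substantive difference.
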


\begin{proof}
Let  Let $(X,f_n\circ f_{n-1}\circ\ldots\circ f_1)$ be Li-Yorke
chaotic and let $S$ be an uncountable scrambled set for $g= f_n\circ
f_{n-1}\circ\ldots\circ f_1$. Consequently for any $x,y\in S$ there
exists a sequence $(r_k)$ and $(s_k)$ of natural numbers such that
$\lim \limits_{k\rightarrow \infty} d(g^{r_k}(x),g^{r_k}(y))> 0 $
and $\lim \limits_{k\rightarrow \infty}d(g^{s_k}(x),g^{s_k}(y))=0$.
Consequently $\lim \limits_{k\rightarrow \infty} d(\omega_{r_k
n}(x),\omega_{r_k n}(y))> 0 $ and $\lim \limits_{k\rightarrow
\infty}d(\omega_{s_k n}(x),\omega_{s_k n}(y))=0$
 and hence $(X,\mathbb{F})$ is Li-Yorke chaotic.
\end{proof}

In general, studying/characterizing the dynamical behavior of a
non-autonomous system is difficult. However, if the generating
functions $f_i$ are surjective, the above results show that under
certain conditions, some of the dynamical properties of the
non-autonomous system can be studied using its generating functions.
Further, if the generating functions are finite, under certain
conditions, some of the dynamical properties of the non-autonomous
systems can be studied (in many cases characterized) using
autonomous systems.

We now study dynamics of the non-autonomous system in terms of its
components $f_i$. We prove that even if the individual maps $f_k$
exhibit certain dynamical behavior, the system $(X,\mathbb{F})$ may
not exhibit similar dynamical behavior.

\begin{ex}
\label{ex1}
Let $\sum =\{0,1\}^{\mathbb{N}}$ be the collection of two-sided
sequences of $0$ and $1$ endowed with the product topology. Let
$\sigma_1,\sigma_2 : \sum \rightarrow \sum$ be defined as
$\sigma_1(\ldots x_{-2}x_{-1}.x_0 x_1 x_2\ldots)= (\ldots
x_{-2}x_{-1}x_0. x_1 x_2\ldots)$ and $\sigma_2(\ldots
x_{-2}x_{-1}.x_0 x_1 x_2\ldots)= (\ldots x_{-2}.x_{-1}x_0 x_1
x_2\ldots)$. Then $\sigma_1, \sigma_2$ are the shift operators and
are continuous with respect to the product topology. Let
$\mathbb{F}=\{\sigma_1, \sigma_2\}$ and let $(X,\mathbb{F})$  be the
corresponding non-autonomous system. It can be seen that each
$\sigma_i$ is transitive. However as $\sigma_1\circ \sigma_2=id$,
the system generated is not transitive.
\end{ex}

\begin{Remark}
The above example proves that a non-autonomous dynamical system may
not be transitive even if each of its generating systems exhibits
the same. It can also be seen that each of the functions are
Li-Yorke chaotic. However, as $\sigma_2\circ \sigma_1=id$, the
system $(X,\mathbb{F})$ fails to be Li-Yorke chaotic. Thus, the
example also shows that the system generated may not exhibit
Li-Yorke chaoticity even if each of the generating functions are
Li-Yorke chaotic.
\end{Remark}

\begin{ex}
\label{ex2}
Let $I$ be the unit interval and let $f_1,f_2: I \rightarrow I$ be
defined as\\

$f_1(x) = \left\{%
\begin{array}{ll}
   2x &\mbox{ if $x\in [0,\frac{1}{2}]$ } \\
  \frac{3}{2}-x &\mbox{ if $x \in [\frac{1}{2}, 1]$ }
\end{array} \right.$

$ f_2(x) = \left\{%
\begin{array}{ll}
   \frac{1}{2}-x &\mbox{ if $x\in [0,\frac{1}{2}]$ } \\
  2x-1 &\mbox{ if $x \in [\frac{1}{2}, 1]$ }
\end{array} \right.$\\

Let $\mathbb{F} = \{f_1,f_2\}$ and let $(X,\mathbb{F})$ be the
corresponding non-autonomous system. As $[\frac{1}{2},1]$ and
$[0,\frac{1}{2}]$ are invariant for $f_1$ and $f_2$ respectively,
none of the $f_i$ are transitive. However, the map

$f_2\circ f_1(x) = \left\{%
\begin{array}{ll}
   \frac{1}{2}-2x &\mbox{ if $x\in [0,\frac{1}{4}]$ } \\
  4x-1 &\mbox{ if $x \in [\frac{1}{4}, \frac{1}{2}]$ }\\
  2-2x &\mbox{ if $x \in [\frac{1}{2}, 1]$ }
\end{array} \right.$

is transitive and hence the non-autonomous system $(X,\mathbb{F})$
is transitive.
\end{ex}

\begin{Remark}
The  example ~\ref{ex1} shows that even if each of the maps $f_i$ are
transitive, the non-autonomous system generated by $\mathbb{F} =
\{f_n : n\in \mathbb{N}\}$ may not be transitive. On the other hand
example ~\ref{ex2}  shows that the non-autonomous system can exhibit
transitivity without any of the maps $f_i$ being transitive. Thus,
transitivity in general cannot be characterized in terms of
transitivity of its generating components $f_i$.
\end{Remark}

\begin{ex}
\label{ex3}
Let $S^1$ be the unit circle and let $\theta\in (0,1)$ be an
rational. Let $f_n: S^1 \rightarrow S^1$ be defined as
$f_n(\phi)=\phi+2\pi \frac{\theta}{3^n}$. As $\theta$ is rational,
each map $f_k$ has dense set of periodic points. However, as $\sum
\limits_{n=1}^{\infty} \frac{\theta}{3^n} < 1$, for any $\beta \in
S^1$,~~~ $f^n(\beta)\neq \beta ~~ \forall n$. Hence the
non-autonomous system generated by $\mathbb{F} = \{f_n : n\in
\mathbb{N}\}$ fails to have any periodic point.
\end{ex}

\begin{ex}
\label{ex4}
Let $S^1$ be the unit circle and let $\theta\in (0,1)$ be an
irrational. Let $f_1,f_2: S^1 \rightarrow S^1$ be defined as
$f_1(\phi)=\phi+2\pi \theta$ and $f_2(\phi)=\phi-2\pi \theta$
respectively and let  $(X,\mathbb{F})$ be the corresponding
non-autonomous dynamical system. As each $f_i$ is an irrational
rotation, no point is periodic for any $f_i$. However as $f_1\circ
f_2 = Id$, the system $( S^1, \mathbb{F})$ has dense set of periodic
points.
\end{ex}

\begin{Remark}
The above examples ~\ref{ex3}  and ~\ref{ex4}  prove that dense
periodicity for a non-autonomous dynamical system cannot be
characterized in terms of dense periodicity of the generating
functions. While example ~\ref{ex4} shows system may exhibit dense
periodicity without any of the generating functions exhibiting the
same, example ~\ref{ex3} proves that the system may fail to have a
dense set of periodic points even when all its generating functions
have the same. Also, it may be noted that as $\theta$ is irrational,
$f_1$ and $f_2$ are also minimal. However, as $f_2\circ f_1= id$,
the system $(X,\mathbb{F})$ fails to be minimal. Thus, the example
also shows that the system generated by a set of minimal systems may
not be minimal.
\end{Remark}

\begin{ex}
\label{ex5}
Let $I$ be the unit interval and let $(q_n)$ be an enumeration of
rationals in I. Let $f_n: I\rightarrow I$ be defined as
$f_n(x)=q_n$~~ for all $x\in I$. Then each $f_n$ is a constant map
but the system $(X,\mathbb{F})$ generated by $\mathbb{F}= \{f_n :
n\in \mathbb{N}\}$ is minimal.
\end{ex}

\begin{Remark}
Once again, example ~\ref{ex4} shows that even if each of the maps $f_i$
are minimal, the non-autonomous system generated by $\mathbb{F}$
need not be minimal. On the other hand, example ~\ref{ex5}  shows that the
non-autonomous system can exhibit minimality without any of the maps
$f_i$ being minimal. Thus, minimality in general cannot be
characterized in terms of minimality of its generating functions.
\end{Remark}

\begin{ex}
\label{ex6}
Let $I$ be the unit interval and let $f_1,f_2$ be defined as\\

$f_1(x) = \left\{%
\begin{array}{ll}
            2x+\frac{1}{2}  & \text{for x} \in [0, \frac{1}{4}] \\
           -2x+\frac{3}{2} & \text{for x} \in [\frac{1}{4}, \frac{3}{4}] \\
            2x-\frac{3}{2}  & \text{for x} \in [\frac{3}{4},1] \\
\end{array} \right.$

$f_2(x) = \left\{%
\begin{array}{ll}
        2x             & \text{for x} \in [0, \frac{1}{2}] \\
       -x+\frac{3}{2} & \text{for x} \in [\frac{1}{2},1] \\
\end{array} \right.$

Let $\mathbb{F} = \{f_1,f_2\}$ and let $(X,\mathbb{F})$ be the
corresponding non-autonomous system. It can be seen that none of the
maps $f_i$ are weakly mixing. However, for any open set $U$, there
exists a natural number $n$ such that $\omega_n(U)=[0,1]$. Hence the
non-autonomous system $(X,\mathbb{F})$ is weakly mixing.
\end{ex}

\begin{Remark}
The non-autonomous dynamical system generated above also exhibits
topological mixing. Thus the example also proves that the
non-autonomous system generated can be weakly mixing (topologically
mixing) without any of its components $f_i$ exhibiting the same.
Also example ~\ref{ex1}  shows that the non-autonomous system generated need
not exhibit weakly mixing (topological mixing) even if each of the
generating functions exhibit weakly mixing (topological mixing).
This proves that in general weakly mixing (topologically mixing) of
a non-autonomous system cannot be characterized in terms of weakly
mixing/ topologically mixing of its components.
\end{Remark}

\begin{ex}
\label{ex7}
Let $I \times S^1$ be the unit cylinder. Let $f_1,f_2: I\times S^1
\rightarrow I \times S^1$ be defined as
$f_1((r,\theta))=(r,\theta+r)$ and $f_2((r,\theta))=(r,\theta-r)$
respectively. Let $\mathbb{F} = \{f_1,f_2\}$ and let
$(X,\mathbb{F})$ be the corresponding non-autonomous system. As
points at different heights of the cylinder are rotating with
different speeds, each of the maps $f_i$ are cofinitely sensitive
\cite{pa}. However as $f_2\circ f_1 =Id$, the system $(I \times S^1,
\mathbb{F})$ is not sensitive.
\end{ex}

\begin{Remark}
Example ~\ref{ex7} shows that even if each of the maps $f_i$ are sensitive,
the non-autonomous system generated need not be sensitive. Also,
example ~\ref{ex2}  proves that the non-autonomous system can exhibit
sensitivity without any of the maps $f_i$ being sensitive. Thus
sensitivity of the non-autonomous system also in general cannot be
characterized in terms of sensitivity of its generating functions.
\end{Remark}

\begin{ex}
\label{ex8}
Let $f_1,f_2: \mathbb{R}\rightarrow \mathbb{R}$  be defined as
$f_1(x)=|x|$ and $f_2(x)=2x-1$. Let $\mathbb{F} = \{f_1,f_2\}$ and
let $(X,\mathbb{F})$ be the corresponding non-autonomous system.
Then $f_1$ and $f_2$ fail to be Li-Yorke chaotic. However, as
$f_2(f_1(-\frac{7}{9}))=\frac{5}{9},
f_2(f_1(\frac{5}{9}))=\frac{1}{9},
f_2(f_1(\frac{1}{9}))=-\frac{7}{9}$, the map $f_2\circ f_1(x):R
\rightarrow R$ poseeses a period $3$ point and hence is Li-Yorke
Chaotic. Consequently, $(X,\mathbb{F})$ is Li-Yorke chaotic.
\end{ex}

\begin{Remark}
The above example shows that the non-autonomous system may be
Li-Yorke chaotic without the generating members being Li-Yorke
chaotic. Also, example ~\ref{ex1}  shows that the non-autonomous system may
not be Li-Yorke chaotic even when all the generating functions are
Li-Yorke chaotic. Thus, Li-Yorke chaoticity of a non-autonomous
system cannot be characterized in terms of Li-Yorke chaoticity of
its generating functions.
\end{Remark}

\section{Conclusion}

In this paper, dynamics of the non-autonomous system generated by a
family $\mathbb{F}$ of continuous self maps on a compact metric
space is discussed. Properties like dense periodicity, transitivity,
weakly mixing, topologically mixing, Li-Yorke chaoticity and
topological entropy are studied and investigated. For a commutative
finite family, we proved that some of the stronger notions of mixing
for the non-autonomous system can be studied using autonomous
systems. We also established that characterization of properties
like weakly mixing also holds analogously in the non-autonomous
case, if the generating family is commutative. Similar
characterization is proved for topological mixing for a general
non-autonomous dynamical system asserting the complex behavior of a
non-autonomous topologically mixing system. It is also observed that
the dynamics of the non-autonomous system generated by the family
$\mathbb{F}$ cannot be characterized in terms of the dynamics of the
generating functions. While the non-autonomous system can exhibit a
certain dynamical behavior without any of the generating functions
exhibiting the same, non-autonomous system may fail to exhibit a
dynamical behavior even if all the generating functions exhibit the
same.

\bibliography{xbib}

\end{document}